\newcommand{\ph}{\varphi}
\newcommand{\C}{\mathbb{C}}
\newcommand{\Z}{\mathbb{Z}}
\newcommand{\mf}{\mathfrak}
\renewcommand{\ker}{\text{ker}}
\newcommand{\im}{\text{im}}
\newcommand{\rank}{\text{rank}}
\DeclareMathOperator{\Res}{Res}
\DeclareMathOperator{\Ind}{Ind}
\newcommand{\Irr}{\text{Irr}}
\renewcommand{\hat}{\widehat}
\newtheorem{theorem}{Theorem}[section]
\newtheorem{def-prop}[theorem]{Definition-Proposition}
\newtheorem{prop}[theorem]{Proposition}
\newtheorem{conjecture}[theorem]{Conjecture}
\newtheorem{lemma}[theorem]{Lemma}
\theoremstyle{definition}
\newtheorem{definition}[theorem]{Definition}
\newtheorem{example}[theorem]{Example}
\theoremstyle{remark}
\newtheorem*{remark}{Remark}
\begin{document}

\title{Dual graded graphs and Bratteli diagrams of towers of groups}
\author{Christian Gaetz}
\address{Department of Mathematics, Massachusetts Institute of Technology, Cambridge, MA.}
\email{\href{mailto:gaetz@mit.edu}{gaetz@mit.edu}} 
\date{\today}

\begin{abstract}
An $r$-dual tower of groups is a nested sequence of finite groups, like the symmetric groups, whose Bratteli diagram forms an $r$-dual graded graph.  Miller and Reiner introduced a special case of these towers in order to study the Smith forms of the up and down maps in a differential poset.  Agarwal and the author have also used these towers to compute critical groups of representations of groups appearing in the tower.  In this paper the author proves that when $r=1$ or $r$ is prime, wreath products of a fixed group with the symmetric groups are the only $r$-dual tower of groups, and conjecture that this is the case for general values of $r$.  This implies that these wreath products are the only groups for which one can define an analog of the Robinson-Schensted bijection in terms of a growth rule in a dual graded graph.  
\end{abstract}

\maketitle

\section{Introduction}


\subsection{Differential posets and dual graded graphs}
\label{sec:dp-dgg-defs}
Differential posets are a class of partially ordered sets introduced by Stanley \cite{Stanley1988} which generalize many of the enumerative and combinatorial properties of Young's lattice $Y$, the poset of integer partitions ordered by inclusion of Young diagrams.  The reader should see \cite{Stanley2012} for basic definitions and conventions for posets in what follows.  Dual graded graphs are a generalization of differential posets developed independently by Fomin \cite{Fomin1994, Fomin1995}.

A \textit{graded graph} is an undirected multigraph $P$ together with a rank function $\rho: P \to \Z_{\geq 0}$ such that $P$ has a unique element $\hat{0}$ of rank zero, all ranks $P_n=\rho^{-1}(\{n\})$ are finite, and such that all edges are between consecutive ranks: if $(x,y)$ is an edge, then $|\rho(x)-\rho(y)|=1$; we denote the multiplicity of this edge by $m(x,y)$.  In analogy with Hasse diagrams of partially ordered sets, we write $x \leq y$ if $\rho(x) \leq \rho(y)$ and there is a path from $x$ to $y$ in $P$ taking only upward steps.  If $\rho(y)=\rho(x)+1$ and $x < y$, we write $x \lessdot y$ and say that $y$ \textit{covers} $x$.  For $a,b \in \Z_{\geq 0}$, we let $P_{[a,b]}$ denote the induced subgraph on the elements of ranks $a,a+1,...,b$.

\begin{definition}
\label{def:UD-condition}
Let $r$ be a postitive integer, let $P$ be a graded graph, and let $\C P$ denote the complex vector space with basis $P$; then $P$ is an \textit{$r$-dual graded graph} if $DU-UD=rI$ where the linear operators $U,D : \C P \to \C P$ are defined by
\begin{align*}
Ux &= \sum_{x \lessdot y} m(x,y) y \\
Dy &= \sum_{x \lessdot y} m(x,y) x.
\end{align*}
If in addition all edges multiplicities $m(x,y)$ are 0 or 1, then $P$ is (strictly speaking, the Hasse diagram of) an \textit{$r$-differential poset}.
\end{definition}

\begin{remark}
What we have defined here are usually called \textit{self-dual} graded graphs.  In the context of towers of groups, self-duality is implied by Frobenius reciprocity, so we do not address the more general definition here.
\end{remark}

\begin{center}
\begin{figure}
\label{fig:youngs-lattice}
\includegraphics[scale=0.2]{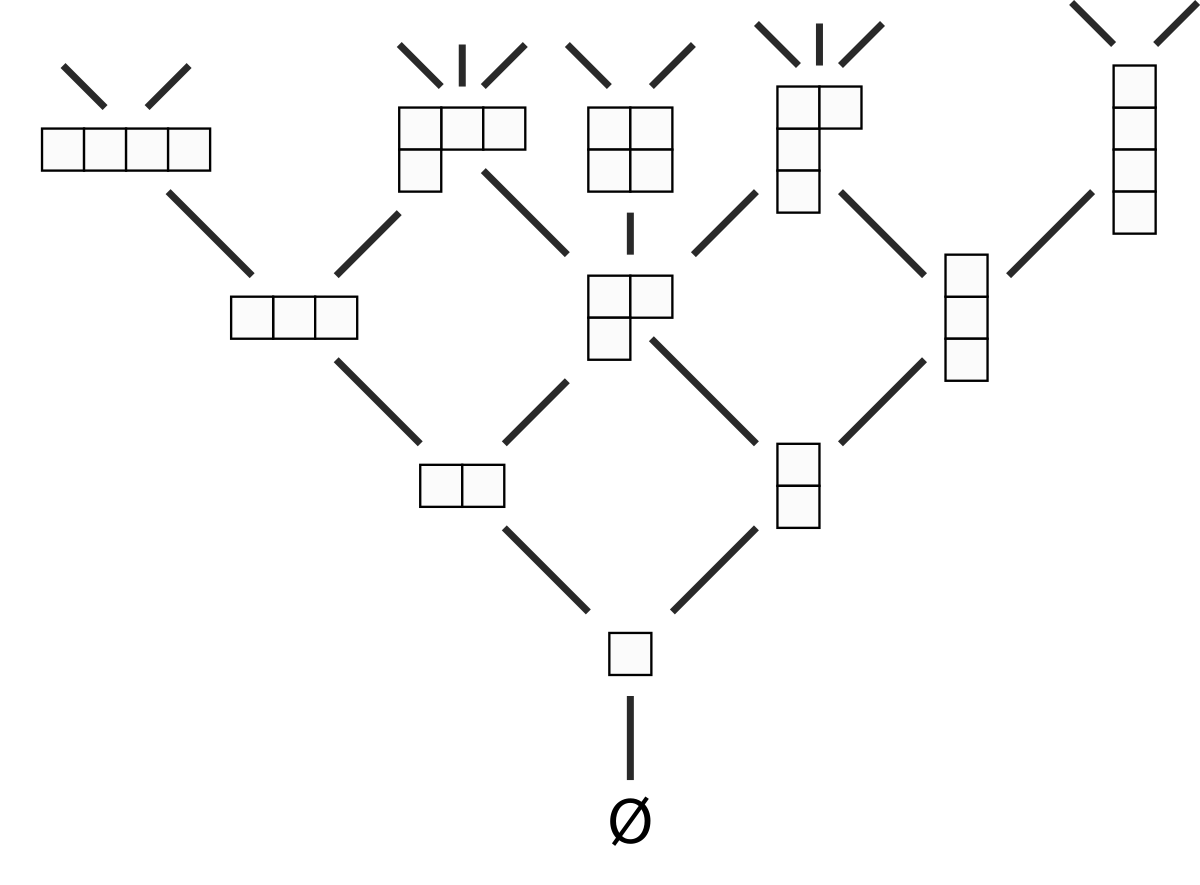}
\caption{Young's lattice $Y$ is a 1-dual graded graph and a 1-differential poset.}
\end{figure}
\end{center}

Many of the combinatorial properties of Young's lattice $Y$ are shared by all dual graded graphs.  First, define a pairing $\langle , \rangle : \C P \times \C P \to \C$ by requiring that $\langle x , y \rangle = \delta_{xy}$ for $x,y \in P$ and extending by bilinearity; let $e(x)=\langle U^n \hat{0}, x \rangle$ where $x \in P_n$.  It is easy to see that $e(x)$ counts the number of paths from $\hat{0}$ to $x$ in $P$, allowing only upward steps.  Then we have

\begin{prop}[\cite{Fomin1994}]
\label{prop:sum-of-squares}
\[
\sum_{x \in P_n} e(x)^2 = r^n n!
\]
\end{prop}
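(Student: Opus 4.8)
The plan is to compute the quantity $\langle U^n \hat{0}, U^n \hat{0}\rangle$ in two different ways. On one hand, since $U^n \hat{0} = \sum_{x \in P_n} e(x)\, x$ by definition of $e(x)$ and the fact that $U$ raises rank by one, bilinearity of the pairing together with $\langle x, y\rangle = \delta_{xy}$ gives immediately
\[
\langle U^n \hat{0}, U^n \hat{0}\rangle = \sum_{x \in P_n} e(x)^2.
\]
So it suffices to show the left-hand side equals $r^n n!$.

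First I would observe that $U$ and $D$ are adjoint with respect to $\langle , \rangle$: for basis elements $x \in P_k$ and $y \in P_{k+1}$ we have $\langle Ux, y\rangle = m(x,y) = \langle x, Dy\rangle$, and all other pairings of basis elements from various ranks vanish for degree reasons; adjointness on all of $\C P$ follows by bilinearity. Hence $\langle U^n \hat{0}, U^n \hat{0}\rangle = \langle \hat{0}, D^n U^n \hat{0}\rangle$, and the problem reduces to showing $D^n U^n \hat{0} = r^n n!\, \hat{0}$.

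The key step is a Weyl-algebra style identity: using $DU - UD = rI$ repeatedly, one proves by induction on $k$ that
\[
D U^k - U^k D = r k\, U^{k-1}
\]
as operators on $\C P$. The base case $k=1$ is the hypothesis; for the inductive step one writes $D U^{k} = (DU) U^{k-1} = (UD + rI)U^{k-1} = U(D U^{k-1}) + r U^{k-1}$ and applies the inductive hypothesis to $D U^{k-1}$. Since $\hat{0}$ is the unique element of rank zero, $D\hat{0} = 0$, so applying the identity with $k = n$ to $\hat{0}$ gives $D U^n \hat{0} = r n\, U^{n-1}\hat{0}$. Iterating (or a second induction) yields $D^n U^n \hat{0} = r n \cdot r(n-1) \cdots r \cdot 1 \cdot \hat{0} = r^n n!\, \hat{0}$, and thus $\langle \hat{0}, D^n U^n \hat{0}\rangle = r^n n!$, completing the proof.

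There is no serious obstacle here: the argument is purely formal, relying only on the defining commutation relation, the adjointness built into the pairing, and $D\hat{0} = 0$. The only point requiring a little care is the bookkeeping in the commutator identity $DU^k - U^k D = rkU^{k-1}$ and making sure the induction that extracts the factor $r^n n!$ from $D^n U^n \hat{0}$ is set up cleanly; both are routine.
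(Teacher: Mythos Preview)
Your argument is correct and is precisely the standard one: compute $\langle U^n\hat 0, U^n\hat 0\rangle$ two ways, using adjointness of $U$ and $D$ together with the Weyl-type commutation $DU^k - U^kD = rkU^{k-1}$ and $D\hat 0 = 0$. The paper does not actually supply a proof of this proposition; it simply cites Fomin, and the argument you wrote is essentially the one found there (and in Stanley's original differential-posets paper), so there is nothing to compare.
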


When $P=Y$, it is easy to see that $e(\lambda)$ is the number $f_{\lambda}$ of standard Young tableaux of shape $\lambda$, and so Proposition \ref{prop:sum-of-squares} reduces to the well known fact that $\sum_{\lambda \in Y_n} f_{\lambda}^2 = n!$, which follows from the Robinson-Schensted correspondence.  

\subsection{Towers of groups}
\label{sec:towers}
A \textit{tower of groups} $\mf{G}$ is an infinite nested sequence of finite groups $\mf{G}: \{e\}=G_0 \subset G_1 \subset G_2 \subset \cdots$ (see Section \ref{sec:strong-towers} where this notion is distinguished from the more restrictive sense in which Bergeron, Lam, and Li \cite{Bergeron2012} use the term \textit{tower of algebras}).

For a finite group $G$, we let $R(G)$ denote the representation ring of complex linear combinations of complex  $G$-representations subject to the relations $[V \oplus W]=[V]+[W]$ and $[V \otimes W] = [V] \cdot [W]$.  As a vector space, $R(G)$ has a distinguished basis consisting of the classes of irreducible representations $[V_{\lambda}]$ for $V_{\lambda} \in \Irr(G)$.  For $\mf{G}$ a tower of groups, we let
\[
R(\mf{G})=\bigoplus_{i=0}^{\infty} R(G_i).
\] 
Although each $R(G_i)$ has a ring structure, we regard $R(\mf{G})$ only as a complex vector space (see Section \ref{sec:strong-towers}), together with an inner product $\langle V_{\lambda}, V_{\mu} \rangle = \delta_{\lambda \mu}$ defined so that the distinguished basis of irreducibles is orthonormal; this inner product coincides with the usual inner product of characters when restricted to a subspace $R(G_i)$.

The space $R(\mf{G})$ has a natural pair of adjoint linear operators $\Ind, \Res: R(\mf{G}) \to R(\mf{G})$ defined by
\begin{align*}
\Ind([V_{\lambda}])&=[\Ind_{G_i}^{G_{i+1}} V_{\lambda}]  \\
\Res([V_{\lambda}]) &=[\Res^{G_i}_{G_{i-1}} V_{\lambda}] 
\end{align*}
where $V_{\lambda} \in \Irr(G_i)$, and extended by linearity.  We define $\Res([V])=0$ for $[V] \in R(G_0) \cong \C$.  We now define the main object of study.

\begin{definition}[\cite{Miller2009}] \label{def:dtg}
A tower of groups $\mf{G}$ is an $r$-\textit{dual tower of groups} if the linear operators $\Ind, \Res: R(\mf{G}) \to R(\mf{G})$ satisfy the relation 
\begin{equation} \label{eq:DTG}
\Res \Ind - \Ind \Res = r I.
\end{equation}
That is, $\mf{G}$ is an $r$-dual tower of groups if and only if the Bratteli diagram of $\mf{G}$ is an $r$-dual graded graph $P$.  In this case the operators $\Ind, \Res: R(\mf{G}) \to R(\mf{G})$ correspond to $U,D: \C P \to \C P$ and the inner products on $\C P$ and $R(\mf{G})$ coincide; we write $P=P(\mf{G})$.  We also abuse terminology by saying $V_{\lambda}$ covers  $V_{\mu}$ if $\lambda$ covers $\mu$ in $P$.

If in addition the branching rules between $G_i$ and $G_{i+1}$ are multiplicity-free for all $i$ then we say that $\mf{G}$ is an $r$-\textit{differential tower of groups}.  In this case the Bratteli diagram forms an $r$-differential poset $P=P(\mf{G})$.
\end{definition}

Miller and Reiner introduced differential towers of groups in order to study the Smith forms of the up and down maps in the associated differential poset \cite{Miller2009}.  Agarwal and the author have also used these towers to compute critical groups of representations of groups appearing in the tower \cite{Agarwal2017, Gaetz2016a}. 

\begin{example} \label{ex:tower-of-symmetric-groups}
It is well known (see for example \cite{James1981}) that irreducible representations of the symmetric group $S_n$ are indexed by partitions $\lambda$ of $n$, and that 
\[
\Res^{S_n}_{S_{n-1}} V_{\lambda} = \bigoplus_{\mu} V_{\mu}
\]
where the direct sum is over all partitions $\mu$ of $n-1$ whose Young diagrams are obtained from that of $\lambda$ by removing a single box.  That is, $\mf{S}: \{e\} \subset S_1 \subset S_2 \subset \cdots$ is a 1-differential tower of groups, with $P(\mf{S})=Y$.

This correspondence can be extended to show that for any abelian group $A$ of order $r$ the tower 
\[
A \wr \mf{S}: \{e\} \subset A \subset A \wr S_2 \subset A \wr S_3 \subset \cdots
\]
is an $r$-differential tower of groups with $P(A \wr \mf{S}) \cong Y^r$ \cite{Okada1990}.  When $A$ is cyclic, $A \wr \mf{S}$ is a complex reflection group \cite{Shephard1954}; the trivial representation corresponds to the tuple $((n),\emptyset,...,\emptyset)$ of partitions, and the reflection representations are of the form $((n-1),\emptyset,...,\emptyset,(1),\emptyset,...,\emptyset)$ when $r>1$ or $((n-1,1))$ when $r=1$.  If $\boldsymbol{\lambda}=(\lambda^1,...,\lambda^r)$ is a tuple of partitions with total size $n$, then the dimension of the corresponding representation $V_{\boldsymbol{\lambda}}$ of $A \wr S_n$ is:
\[
\binom{n}{|\lambda^1|,...,|\lambda^r|} \prod_{i=1}^r f_{\lambda^i}
\]
where $f_{\mu}$ is the dimension of the irreducible $S_n$ representation indexed by $\mu$.

More generally, if $H$ is any finite group of order $r$ with $k$-conjugacy classes and irreducible representations of dimensions $d_1,...,d_k$, then 
\[
H \wr \mf{S}: \{e\} \subset H \subset H \wr S_2 \subset H \wr S_3 \subset \cdots
\]
is an $r$-dual tower of groups with $P(H \wr S_n) \cong (d_1Y) \times (d_2Y) \times \cdots \times (d_kY)$ where $dY$ denotes the $d^2$-dual graded graph whose underlying simple graph is $Y$, but where all edge multiplicities are multiplied by $d$ (see \cite{Okada1990}, Theorem 4.1).

Theorem \ref{thm:main-theorem} below shows that this is in fact the \textit{only} $r$-dual tower of groups when $r=1$ or $r$ is prime.  In the rest of the paper, we use the convention that $A \wr S_0 = \{e\}$ while $A \wr S_1 = A$.

In \cite{Okada1990}, Okada also gives an explicit bijection which generalizes the \\ Robinson-Schensted correspondence to a bijection between elements of the groups $H \wr S_n$ and pairs of paths in $Y^r$.  Fomin's theory of dual graded graphs \cite{Fomin1994, Fomin1995} later showed that this bijection was a special case of one which holds for any dual graded graph, defined in terms of growth rules.  The main results of this paper, Theorem \ref{thm:main-theorem} and Conjecture \ref{conj:main-conjecture} below, together assert that the groups $H \wr S_n$ are the \textit{only} groups for which one can define an analog of the Robinson-Schensted correspondence via growth rules. 
\end{example}

\begin{theorem} \label{thm:main-theorem}
Let $\mf{G}: \{e\}=G_0 \subset G_1 \subset G_2 \subset \cdots$ be an $r$-dual tower of groups where $r$ is one or prime.  Then $P(\mf{G}) \cong Y^r$ and $G_n \cong (\Z/r \Z) \wr S_n$ for all $n \geq 0$.
\end{theorem}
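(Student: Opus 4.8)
The plan is to prove both assertions by a simultaneous induction on $n$, carrying the stronger hypothesis that there is an isomorphism $G_n\cong(\Z/r\Z)\wr S_n$ under which $\Irr(G_n)$, the dimensions, and the branching rules to $G_{n-1}$ all match the combinatorial model $Y^r_{[0,n]}$ of Example~\ref{ex:tower-of-symmetric-groups}. Two preliminary observations hold for any $r$. Since $P(\mf{G})$ is a Bratteli diagram, induction in stages gives $U^n\hat 0=[\Ind_{G_0}^{G_n}\mathbf 1]=[\C[G_n]]=\sum_{V\in\Irr(G_n)}(\dim V)[V]$, so $e(V)=\dim V$ for every $V\in\Irr(G_n)$; Proposition~\ref{prop:sum-of-squares} then forces $|G_n|=\sum_V(\dim V)^2=r^n n!$, hence $[G_{n+1}:G_n]=r(n+1)$ and $|G_1|=r$. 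Also, every rank-$(n{+}1)$ vertex $y$ covers at least one rank-$n$ vertex, since otherwise $Dy=0$ and $e(y)=\langle U^n\hat 0,Dy\rangle=0$ contradicts $e(y)=\dim V_y\geq1$. For the base case $n\le1$: $|G_1|=r$ with $r$ prime (or $1$) forces $G_1\cong\Z/r\Z$, which has exactly $r$ irreducibles, all one-dimensional, so $P_{[0,1]}$ consists of $r$ vertices each joined to $\hat 0$ by a single edge, which is $Y^r_{[0,1]}$. This is the only place primality is used, and it is exactly what can fail for composite $r$, where $G_1$ may be nonabelian.

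For the inductive step, write $M$ for the (unknown) incidence matrix of $P(\mf{G})$ between ranks $n$ and $n+1$, so that $U\colon\C P_n\to\C P_{n+1}$ has matrix $M$ and $D=M^{T}$. Evaluating $DU-UD=rI$ on $\C P_n$ and using that the operator $UD$ on $\C P_n$ is already determined by $P_{[0,n]}\cong Y^r_{[0,n]}$, we obtain $M^{T}M=rI+UD|_{\C P_n}$, and the same computation inside $Y^r$ shows the right-hand side equals $\tilde M^{T}\tilde M$, where $\tilde M$ is the incidence matrix of $Y^r$ between ranks $n$ and $n+1$. The heart of the argument is then a rigidity statement: a nonnegative integer matrix $M$ with no zero row and with $M^{T}M=\tilde M^{T}\tilde M$ must equal $\tilde M$ after a permutation of its rows. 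Granting this and relabeling $P_{n+1}$ so that $M=\tilde M$, we get $P_{[0,n+1]}\cong Y^r_{[0,n+1]}$, and by Frobenius reciprocity the dimensions $\dim V_y=e(y)$ and all branching multiplicities between $G_n$ and $G_{n+1}$ agree with those of $(\Z/r\Z)\wr S_{n+1}$. To upgrade this to a group isomorphism, I would use that $G_n\cong(\Z/r\Z)\wr S_n$ is a subgroup of index $r(n+1)$, together with the known decomposition of the permutation module $\C[G_{n+1}/G_n]=\Ind_{G_n}^{G_{n+1}}\mathbf 1=U\bigl(((n),\emptyset,\dots,\emptyset)\bigr)$ and the known restrictions $\Res_{G_n}^{G_{n+1}}V$ for all $V$, to reconstruct the action of $G_{n+1}$ on the $r(n+1)$-element set $G_{n+1}/G_n$ and to exhibit a normal subgroup $N\cong(\Z/r\Z)^{n+1}$ with $G_{n+1}/N\cong S_{n+1}$ acting in the standard way, so that $G_{n+1}\cong(\Z/r\Z)^{n+1}\rtimes S_{n+1}=(\Z/r\Z)\wr S_{n+1}$.

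I expect the rigidity statement to be the main obstacle: for a general positive-definite integer matrix the nonnegative integer ``square roots'' are far from unique even after forbidding zero rows (already the matrix $\mathrm{diag}(5,5)$ admits several), so the special structure of $Y^r_{[0,n]}$ must be used --- above all that the off-diagonal entries of $rI+UD|_{\C P_n}$ are $0$ or $1$, which forces a unique common cover for each pair $\lambda,\mu\in P_n$ with $\langle U\lambda,U\mu\rangle=1$ --- after which a careful bookkeeping on the supports of the columns of $M$, generalizing the explicit rank $0\to1\to2$ computation, pins down each new vertex and edge and in particular rules out extra vertices in $P_{n+1}$. The concluding group-theoretic reconstruction is a secondary difficulty, which I would handle by detecting the elementary abelian normal subgroup $N$ representation-theoretically, as the common kernel of the irreducibles of $G_{n+1}$ inflated from the $S_{n+1}$-quotient, identified combinatorially inside $Y^r_{n+1}$.
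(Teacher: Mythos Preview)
Your plan has a genuine gap: the combinatorial ``rigidity statement'' you isolate as the heart of the argument is \emph{false}, already for $r=1$.  Concretely, take $n=4$.  With $P_{[0,4]}\cong Y_{[0,4]}$ one computes
\[
M^{T}M \;=\; I+UD\big|_{\C P_4}
\;=\;
\begin{pmatrix}
2&1&0&0&0\\ 1&3&1&1&0\\ 0&1&2&1&0\\ 0&1&1&3&1\\ 0&0&0&1&2
\end{pmatrix}
\]
in the basis $(4),(3,1),(2,2),(2,1,1),(1^4)$.  Young's lattice supplies the $7\times 5$ solution $\tilde M$, but there is also an $8\times 5$ nonnegative integer solution with no zero row, for instance the matrix whose rows (as subsets of the five columns) are $\{1,2\},\{1\},\{2,3,4\},\{2\},\{4,5\},\{5\},\{3\},\{4\}$.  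One checks directly that this $M$ satisfies $M^{T}M$ equal to the matrix above.  This second solution is exactly what the Young--Fibonacci $1$-differential poset $Z(1)$ looks like at ranks $4$ and $5$ (where $|Z(1)_5|=8\neq 7=p(5)$), and more generally the existence of uncountably many $r$-differential posets shows that $P_{[0,n]}$ alone cannot force $P_{[0,n+1]}$.  So the step ``$M^{T}M=\tilde M^{T}\tilde M$ plus nonnegativity plus no zero row implies $M=\tilde M$ up to row permutation'' cannot be salvaged by bookkeeping on supports: the off-diagonal entries of $rI+UD$ really are all $0$ or $1$ here, and the ambiguity persists.

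This is why the paper does \emph{not} try to determine $P_{[0,m+1]}$ first and the group second.  Instead it uses group theory---Clifford's theorem applied to the normal closure $N$ of $G_m$ in $G_{m+1}$, together with the Shephard--Todd classification of complex reflection groups---to pin down $G_{m+1}\cong(\Z/r\Z)\wr S_{m+1}$ directly, and only then reads off the branching.  Primality of $r$ is used again at that stage (to force $[G_{m+1}:N]\in\{1,r\}$ and to make the relevant $U_i$ faithful), not only to identify $G_1$.  Your secondary ``reconstruction'' step would face the same issue in reverse: even granting the Bratteli diagram, identifying a normal $(\Z/r\Z)^{n+1}$ and showing the extension is the standard split one is nontrivial, and the paper sidesteps this entirely via the reflection-group classification.
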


\begin{conjecture} \label{conj:main-conjecture} Let $\mf{G}: \{e\}=G_0 \subset G_1 \subset G_2 \subset \cdots$ be a tower of groups and let $r \in \Z_{>0}$.
\begin{itemize}
\item[(a)] If $\mf{G}$ is $r$-differential, then $P(\mf{G}) \cong Y^r$ and there exists an abelian group $A$ of order $r$, not depending on $n$, such that $G_n \cong A \wr S_n$ for all $n \geq 0$. 
\item[(b)] If $\mf{G}$ is an $r$-dual tower of groups then $P(\mf{G}) \cong (d_1Y) \times \cdots \times (d_kY)$ for some $d_1,...,d_k$ and there exists a group $H$ of order $r$, not depending on $n$, such that $G_n \cong H \wr S_n$ for all $n \geq 0$. 
\end{itemize}
\end{conjecture}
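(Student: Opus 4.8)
The plan is to extract numerical data first, then to run an induction on $n$ that realizes each $G_n$ as an imprimitive group, reducing the whole conjecture to part~(b). The starting point is that $e(\lambda)=\dim V_\lambda$ for $V_\lambda\in\Irr(G_n)$: since $G_0=\{e\}$, induction of the trivial representation is the regular representation, so $e(\lambda)=\langle \C[G_n],V_\lambda\rangle=\dim V_\lambda$, and Proposition~\ref{prop:sum-of-squares} then forces
\[
|G_n|=\sum_{\lambda\in P_n}(\dim V_\lambda)^2=r^n\,n!.
\]
In particular $H:=G_1$ has order $r$ (and is fixed, hence automatically independent of $n$), and $[G_n:G_{n-1}]=rn$. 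I would next observe that part~(a) is the special case of part~(b) in which $H$ is abelian: the $r$-differential hypothesis makes $\Res^{G_1}_{G_0}V_\lambda$ multiplicity-free, i.e. $\dim V_\lambda\le 1$ for every $V_\lambda\in\Irr(H)$, so $H$ is abelian with $k=r$ and all $d_i=1$, and the product $(d_1Y)\times\cdots\times(d_kY)$ collapses to $Y^r$. Thus it suffices to prove that $G_n\cong H\wr S_n$ for all $n$ in the general dual setting; the graph isomorphism $P(\mf{G})\cong (d_1Y)\times\cdots\times(d_kY)$ then follows from Okada's computation recalled in Example~\ref{ex:tower-of-symmetric-groups} (\cite{Okada1990}).

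The target of the induction is to realize the action of $G_n$ on $\Omega_n=G_n/G_{n-1}$ (a set of size $rn$) as imprimitive, with a $G_n$-invariant system of $n$ blocks of size $r$, on which the block-permutation gives a surjection $G_n\twoheadrightarrow S_n$ whose kernel $N_n\cong H^n$ acts by the regular representation of $H$ on each block. The standard universal property of the imprimitive action then embeds $G_n\hookrightarrow H\wr S_n$, and a comparison of orders upgrades this to an isomorphism. Concretely, I would try to produce, inductively, a chain $G_{n-1}\subset K_n\subset G_n$ with $[K_n:G_{n-1}]=r$ and $[G_n:K_n]=n$, where $K_n$ is the setwise stabilizer of the block containing the base point; in the model $H\wr S_n$ this subgroup is $K_n=G_{n-1}\times H$ (the $n$-th coordinate of the base commutes with $G_{n-1}$), and the required normal subgroup is $N_n=H^n$, extending $N_{n-1}=H^{n-1}\trianglelefteq G_{n-1}$ from the inductive hypothesis.

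The main tool for locating $K_n$ and $N_n$ is the relation \eqref{eq:DTG} read through Mackey theory. On $R(G_n)$ one has the projection-formula identity $\Ind\Res\,[V]=[V\otimes\C[G_n/G_{n-1}]]$, while
\[
\Res\Ind\,[V]=\bigoplus_{g\in G_n\backslash G_{n+1}/G_n}\big[\Ind^{G_n}_{G_n\cap gG_ng^{-1}}\,{}^{g}(\Res V)\big],
\]
whose identity double coset contributes $[V]$. Subtracting this term and invoking \eqref{eq:DTG} expresses the contribution of the nontrivial double cosets entirely in terms of $V\otimes\C[G_n/G_{n-1}]$, which rigidly constrains the double-coset structure of $G_n$ in $G_{n+1}$. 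The plan is to show that these constraints, fed with the inductive identification $G_{n-1}\cong H\wr S_{n-1}$, force the double cosets $G_n\backslash G_{n+1}/G_n$ to match those of $H\wr S_n$ in $H\wr S_{n+1}$ exactly; this simultaneously produces the short intermediate subgroup $K_{n+1}$ (from the "small" double coset) and pins the regular $H$-action on each block, after which assembling the $N_n$ across levels yields the surjection onto $S_n$.

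The hard part will be converting the purely combinatorial/branching rigidity into the existence of the normal base subgroup $N_n\cong H^n$, i.e. of the surjection $G_n\twoheadrightarrow S_n$: the Bratteli diagram determines the character-theoretic compatibility of $G_{n-1}\subset G_n$ but never determines a group on its own, so the argument must exploit the full tower of honest subgroup inclusions rather than any single level. For $r$ prime (Theorem~\ref{thm:main-theorem}) the group $H=\Z/r\Z$ is forced to be cyclic and the Mackey bookkeeping degenerates to the two $S_n\subset S_{n+1}$ double cosets decorated by $\Z/r\Z$, which makes the recognition tractable; for composite $r$ and non-abelian $H$ the number and internal structure of the double cosets is governed by the conjugacy data $(d_1,\dots,d_k)$ of $H$ and is not rigid a priori. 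I expect the decisive difficulty to be carrying out the Mackey comparison \emph{without} assuming in advance which order-$r$ group $H$ is, since the combinatorial input $e(\lambda)=\dim V_\lambda$ and $\sum_\lambda e(\lambda)^2=r^n n!$ only control dimensions and must somehow be leveraged to exclude non-product dual graded graphs and non-wreath extensions at once; this is precisely why the statement is posed as Conjecture~\ref{conj:main-conjecture} rather than a theorem.
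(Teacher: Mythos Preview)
This statement is Conjecture~\ref{conj:main-conjecture}, which the paper leaves open; there is no proof in the paper to compare against, and you acknowledge as much in your final paragraph. For the special case that \emph{is} proved (Theorem~\ref{thm:main-theorem}, $r=1$ or $r$ prime), the paper's method is rather different from your outline: instead of analysing $G_n\backslash G_{n+1}/G_n$ via Mackey, the paper works with the normal closure $N$ of $G_m$ in $G_{m+1}$, uses Clifford's theorem to control how the constituents of $\Ind\mathbbm{1}_{G_m}$ restrict through $N$, and then exhibits a faithful irreducible representation of $G_{m+1}$ generated by pseudoreflections, so that the Shephard--Todd classification identifies $G_{m+1}$ as $(\Z/r\Z)\wr S_{m+1}$. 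Primality of $r$ is used to force $G_1\cong\Z/r\Z$ at the base and to guarantee faithfulness of the reflection representation at the inductive step; neither use survives for composite $r$, which is why the general statement remains conjectural.

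Your proposal is an honest research plan rather than a proof. The preliminaries---$|G_n|=r^nn!$, and the reduction of (a) to (b) via the observation that multiplicity-free branching from $G_1$ to $G_0$ forces $G_1$ abelian---are correct and agree with the paper. The Mackey identity you write is also correct and does yield the relation $(\text{nontrivial Mackey summands})=[V\otimes\C[G_n/G_{n-1}]]+(r-1)[V]$, which genuinely constrains the intersections $G_n\cap gG_ng^{-1}$. But the step you yourself label ``the hard part''---recovering from this an imprimitive block system and the normal base $H^n$---is exactly the content of the conjecture, and your sketch does not supply it. One further gap worth flagging: even granting $G_n\cong H\wr S_n$ abstractly for each $n$, the conclusion $P(\mf{G})\cong(d_1Y)\times\cdots\times(d_kY)$ does not follow from Okada's computation alone, since the Bratteli diagram depends on the embeddings $G_{n-1}\hookrightarrow G_n$ and not merely on the isomorphism types of the groups. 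The paper handles this in the prime case by showing, inside the reflection representation, that the inclusion is conjugate to the standard one; any general argument would have to do likewise.
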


Clearly part (b) of the conjecture implies part (a).

\begin{remark}
The general construction outlined in \cite{Goodman1989} shows that any graded graph can be realized as the Bratteli diagram of some sequence $A_0 \subset A_1 \subset A_2 \subset \cdots$ of complex semisimple algebras.  In particular, for all $r$ each of the (uncountably many) $r$-differential posets and $r$-self-dual graded graphs can be realized in this way; thus the implication in Theorem \ref{thm:main-theorem} that $Y^r$ is the only differential poset or dual graded graph which can arise from a sequence of \textit{group algebras} may initially be surprising.
\end{remark}

\subsection{Relation to Bergeron, Lam, and Li's towers of algebras}
\label{sec:strong-towers}

In \cite{Bergeron2012}, Bergeron, Lam, and Li study what they  call towers of algebras; I will call their notion a \textit{strong tower of algebras} in order to avoid confusion.  Strong towers of algebras are certain algebras $A=\bigoplus_{i=0}^n A_i$ whose summands $A_i$ are algebras in their own right which are subject to several additional conditions; the corresponding branching rules satisfy the same differential relation as in (\ref{eq:DTG}).  A pair of Grothendieck groups $(G(A),K(A))$ bears a similar relationship to $A$ as the vector space $R(\mf{G})$ does to a tower of groups $\mf{G}$.  The additional conditions required of a strong tower of algebras are restrictive enough to guarantee that $G(A)$ and $K(A)$ in fact have the structure of dual combinatorial Hopf algebras.  If the $A_i$ are complex semisimple algebras one obtains that $G(A)=K(A)$ is a positive self-dual Hopf algebra.  By Zelevinsky's classification of such Hopf algebras \cite{Zelevinsky1981}, this forces $G(A)$ to be isomorphic to a tensor product of copies of the Hopf algebra $\Lambda$ of symmetric functions, and therefore forces the corresponding Bratteli diagram for $A$ to be isomorphic to a product of copies of Young's lattice.

In the case where $A_i = \C[G_i]$ are group algebras for $i=0,1,...$, the additional conditions required of a strong tower of algebras imply in particular that for all $n \geq k \geq 0$ the group $G_k \times G_{n-k}$ is isomorphic to a subgroup of $G_n$.  This condition is clearly not true for general towers of groups, and it is therefore a notable feature of Theorem \ref{thm:main-theorem} and Conjecture \ref{conj:main-conjecture} that this condition, and the resulting Hopf structure on $R(\mf{G})$, emerge from the local condition (\ref{eq:DTG}) which only relates the branching rules of consecutive pairs of groups in the tower.

\section{Proof of Theorem \ref{thm:main-theorem}} \label{sec:proof}

Throughout Section \ref{sec:proof}, let $\mf{G}: \{e\}=G_0 \subset G_1 \subset G_2 \subset \cdots$ be an $r$-dual tower of groups where $r$ is one or prime.  The proof of Theorem \ref{thm:main-theorem} will proceed by showing inductively that the groups $G_n$ are complex reflection groups and applying the known classification \cite{Shephard1954} of these groups.  We first prove the following useful fact:   

\begin{prop} \label{prop:size-of-Gn}
For all $n \geq 0$, the group $G_n$ has order $r^nn!$.
\end{prop}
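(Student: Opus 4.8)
The plan is to combine the path-counting interpretation of the numbers $e(x)$ with the classical identity that, for a finite group $G$, the sum of the squares of the dimensions of its irreducible representations equals $|G|$. The hypothesis that $r$ is $1$ or prime will not actually be needed, so the same argument establishes the statement for every positive integer $r$; no induction on $n$ is required either, since each $G_n$ is treated directly.

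First I would show that $e(V_\lambda) = \dim V_\lambda$ for each $V_\lambda \in \Irr(G_n)$, where $P = P(\mf{G})$ is the Bratteli diagram. By definition $e(V_\lambda) = \langle \Ind^n \hat{0}, V_\lambda \rangle$, and the operator $\Ind$ on $R(\mf{G})$ restricts on each summand $R(G_i)$ to ordinary induction $\Ind_{G_i}^{G_{i+1}}$, so transitivity of induction identifies $\Ind^n \hat{0}$ with the class of $\Ind_{G_0}^{G_n}$ applied to the unique (trivial) representation of $G_0 = \{e\}$. Frobenius reciprocity then gives $e(V_\lambda) = \langle \Ind_{G_0}^{G_n}(\mathrm{triv}), V_\lambda\rangle = \langle \mathrm{triv}, \Res^{G_n}_{G_0} V_\lambda\rangle$, and since restriction to the trivial group merely records the underlying vector space of $V_\lambda$, this multiplicity equals $\dim V_\lambda$. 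Equivalently, $e(V_\lambda)$ counts upward paths from $\hat 0$ to $V_\lambda$ in $P$, a path being a chain of covers whose total weight (product of edge multiplicities) is precisely the multiplicity of $V_\lambda$ in $\Ind_{G_0}^{G_n}(\mathrm{triv})$.

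Then I would apply Proposition \ref{prop:sum-of-squares} to the $r$-dual graded graph $P$, obtaining $\sum_{V_\lambda \in \Irr(G_n)} (\dim V_\lambda)^2 = \sum_{x \in P_n} e(x)^2 = r^n n!$. Since $\sum_{V_\lambda \in \Irr(G_n)} (\dim V_\lambda)^2 = |G_n|$ by the decomposition of the regular representation of $G_n$, this yields $|G_n| = r^n n!$; the case $n = 0$ is the trivial check $|G_0| = 1 = r^0 \cdot 0!$. There is no serious obstacle in this argument: the only step requiring care is the identification $e(V_\lambda) = \dim V_\lambda$, i.e. matching the purely combinatorial path count defining $e$ with the representation-theoretic multiplicity of $V_\lambda$ in the $n$-fold induced trivial representation, after which the proposition follows immediately from Proposition \ref{prop:sum-of-squares} and the regular-representation identity.
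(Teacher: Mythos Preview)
Your proof is correct and follows essentially the same strategy as the paper: establish $e(\lambda)=\dim V_\lambda$, then combine Proposition~\ref{prop:sum-of-squares} with the regular-representation identity $\sum (\dim V_\lambda)^2=|G_n|$. The only cosmetic difference is that you derive $e(\lambda)=\dim V_\lambda$ via transitivity of induction and Frobenius reciprocity, whereas the paper observes that both quantities satisfy the same recurrence $e(\lambda)=\sum_{\mu\lessdot\lambda} m(\mu,\lambda)\,e(\mu)$ with the same initial value at $\hat 0$.
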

\begin{proof}
First, note that $e(\lambda)=\dim(V_{\lambda})$, since both satisfy the same recurrence (as the sum of the values for $\mu \lessdot \lambda$, with multiplicity) and the initial condition $e(\hat{0})=\dim(\mathbbm{1}_{G_0})=1$.  Thus by Proposition \ref{prop:sum-of-squares} and the standard fact that the sum of the squares of the dimensions of the irreducibles is the order of the group we have:
\[
|G_n| = \sum_{V_{\lambda} \in \Irr(G_n)} \dim(V_{\lambda})^2 = \sum_{\lambda \in P_n} e(\lambda)^2 = r^nn!.
\]
\end{proof}

Before proceeding with the proof, we state Clifford's Theorem, which will be used several times in what follows.

\begin{theorem}[Clifford's Theorem]
Let $G$ be a finite group, $N$ a normal subgroup and $\ph:G \to GL(V)$ an irreducible representation of $G$.  Then the irreducible factors appearing in $\Res^{G}_N V$ are in a single orbit under the action of $G$ given by $\psi^{(g)}(n)=\psi(gng^{-1})$.

In particular, all irreducible factors of $\Res^G_N V$ are of the same dimension, and if $\mathbbm{1}_N$ appears as a factor, then $\Res^G_N V$ is a direct sum of trivial representations.  
\end{theorem}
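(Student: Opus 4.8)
The plan is to decompose $\Res^G_N V$ into its $N$-isotypic components and to show that $G$ permutes these components transitively, with transitivity forced by the irreducibility of $V$.

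First I would write $\Res^G_N V = \bigoplus_i W_i$, where each $W_i$ is the isotypic component of a distinct irreducible constituent $\psi_i \in \Irr(N)$, i.e.\ $W_i$ is the sum of all $N$-subrepresentations of $V$ isomorphic to $\psi_i$. The crucial observation is that each $\ph(g)$, $g \in G$, permutes the set $\{W_i\}$: for $v \in W_i$ and $n \in N$ one has $\ph(n)\ph(g)v = \ph(g)\ph(g^{-1}ng)v$, and since $N$ is normal $g^{-1}ng \in N$, so $\ph(g)W_i$ is exactly the isotypic component of $V$ for the conjugate representation $n \mapsto \psi_i(g^{-1}ng)$. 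Hence $G$ acts on $\{W_i\}$, and the orbits are precisely the $G$-conjugacy classes occurring among the $\psi_i$ under the action $\psi^{(g)}(n) = \psi(gng^{-1})$.

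Next I would fix a single orbit $\mathcal{O}$ and set $W_{\mathcal{O}} = \bigoplus_{i \in \mathcal{O}} W_i$. This subspace is stable under $N$ (each $W_i$ is) and under every $\ph(g)$ (by the previous step, since $\mathcal{O}$ is $G$-stable), so it is a nonzero $G$-subrepresentation of $V$; irreducibility of $V$ then forces $W_{\mathcal{O}} = V$, and therefore there is only one orbit. This is the main conclusion: every irreducible constituent of $\Res^G_N V$ lies in a single $G$-orbit.

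The two ``in particular'' statements follow immediately. Conjugate representations have equal dimension, so all constituents of $\Res^G_N V$ have the same dimension; and if $\mathbbm{1}_N$ occurs as a constituent then, since $\mathbbm{1}_N^{(g)}(n) = \mathbbm{1}_N(gng^{-1}) = \mathbbm{1}_N(n)$, the trivial representation is alone in its $G$-conjugacy class, so every constituent equals $\mathbbm{1}_N$ and $\Res^G_N V$ is a direct sum of trivial representations. This is the classical argument and I do not expect a real obstacle; the one load-bearing step is the appeal to irreducibility of $V$ to conclude $W_{\mathcal{O}} = V$, and the only bookkeeping subtlety is keeping the conjugation convention $\psi^{(g)}$ versus $\psi^{(g^{-1})}$ consistent, which does not affect the orbit structure or either corollary.
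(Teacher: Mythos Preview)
Your argument is the standard proof of Clifford's Theorem and is correct; the only point worth noting is that the paper does not actually prove this statement---it merely quotes Clifford's Theorem as a classical result to be used later---so there is nothing to compare against beyond observing that your proposal supplies exactly the textbook proof the paper implicitly relies on.
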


\subsection{Base case} \label{sec:base-case}

By assumption we have that $G_0 = \{e\}$ is trivial, and since $r$ is 1 or prime, we know by Proposition \ref{prop:size-of-Gn} that $G_1 \cong \Z/r \Z$.  In this section we will show:

\begin{prop} \label{prop:base-case}
When $r$ is one or prime we must have $G_2 \cong (\Z/ r \Z) \wr S_2$ and $P_{[0,2]} \cong (Y^r)_{[0,2]}$.
\end{prop}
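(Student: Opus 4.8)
The plan is to pin down the entire structure of $G_2$ and of the ranks $P_0, P_1, P_2$ purely from the representation-theoretic constraints, using $|G_2| = 2r^2$ (Proposition \ref{prop:size-of-Gn}), the dual graded graph relation $DU - UD = rI$ applied at rank $0$ and rank $1$, and Clifford's Theorem applied to the normal subgroup $G_1 \cong \Z/r\Z$ (which is normal in $G_2$ because it has index $2$). First I would analyze $P_1$: since $G_1 \cong \Z/r\Z$ has exactly $r$ irreducible representations, all one-dimensional, and each $e(\lambda)$ equals $\dim V_\lambda = 1$, the rank $P_1$ consists of $r$ vertices each joined to $\hat 0$ by a single edge. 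Evaluating $DU - UD = rI$ on $\hat 0$ recovers $\sum_{\lambda \in P_1} m(\hat 0,\lambda)^2 = r$, consistent with this; evaluating it on a vertex $\lambda \in P_1$ gives $\sum_{\mu \in P_2} m(\lambda,\mu)^2 - 1 = r$, so each of the $r$ vertices of $P_1$ has exactly $r+1$ up-edges counted with multiplicity squared.

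Next I would use $|G_2| = 2r^2$ together with Clifford's Theorem to enumerate $\Irr(G_2)$. For each irreducible $V_\mu$ of $G_2$, $\Res^{G_2}_{G_1} V_\mu$ is either (i) a sum of copies of a single nontrivial character, in which case by Clifford the $G_2$-orbit of that character has size dividing $[G_2:G_1]=2$, so either the character is $G_2$-fixed (orbit size $1$) or it pairs with another character (orbit size $2$); or (ii) a sum of copies of the trivial character of $G_1$, in which case $V_\mu$ factors through $G_2/G_1 \cong \Z/2\Z$ and is one-dimensional. The one-dimensional representations of $G_2$ are exactly those trivial on the commutator subgroup; since $G_2/G_1$ has order $2$, there are either $2$ or more such. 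Combining the count $\sum \dim(V_\mu)^2 = 2r^2$ with the possible dimensions dictated by Clifford (each $\dim V_\mu$ is either $1$, coming from case (ii), or $e$ where $e \mid 2$ times the orbit size for case (i)), and with the fact from Proposition \ref{prop:sum-of-squares} that $\sum_{\mu \in P_2} e(\mu)^2 = 2r^2$ and $e(\mu) = \dim V_\mu$, I expect to be forced into exactly the multiset of dimensions that occurs for $(\Z/r\Z)\wr S_2$: namely, when $r$ is odd, $r$ irreducibles of dimension $2$ (one for each unordered pair of distinct characters of $\Z/r\Z$ — there are $\binom{r}{2}$ of these, wait, that overcounts) — more carefully, the irreducibles of $(\Z/r\Z)\wr S_2$ of dimension $2$ correspond to unordered pairs of distinct characters and there are $\binom{r}{2}$ of them, plus $2r$ of dimension $1$ when... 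I would recompute: $2\binom{r}{2} + 2r = r(r-1) + 2r = r^2 + r$, which is wrong, so the correct count is $\binom{r}{2}$ of dimension $2$ and $2r$ of dimension $1$ only when... actually $4\binom{r}{2} + (\text{number of } 1\text{-dim}) = 2r^2$ forces the number of one-dimensional irreducibles to be $2r - 2r(r-1) $, so instead the dimension-$2$ count is adjusted: there are $\binom{r}{2}$ two-dimensional and the remaining $2r^2 - 4\binom{r}{2} = 2r^2 - 2r(r-1) = 2r$ accounted for by $2r$ one-dimensional irreducibles. Good — that is exactly the irreducible content of $(\Z/r\Z)\wr S_2$, and the analogous bookkeeping for $r=2$ (where some dimension-$1$ and dimension-$2$ cases merge) works out to the same group $(\Z/2\Z)^2 \rtimes S_2 = D_4$.

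Having matched the character table data, I would then reconstruct the group itself: the structure of $\Res^{G_2}_{G_1}$ forced above shows $G_2$ is generated by $G_1 \cong \Z/r\Z$ together with an order-$2$ element (coming from a lift of the generator of $G_2/G_1$, which exists since one of the one-dimensional representations identifies a subgroup of index $2$ not containing $G_1$ — here I would use that $G_2$ has $2r$ linear characters, so $[G_2,G_2]$ has order $r$, hence $[G_2,G_2]$ together with the structure forces $G_2$ to be a semidirect product $\Z/r\Z \rtimes \Z/2\Z$ times... ) — concretely, $G_2$ has order $2r^2$, a normal subgroup $\Z/r\Z$, abelianization of order $2r$, so $[G_2,G_2] \cong \Z/r\Z$ sits inside a normal $\Z/r\Z \times \Z/r\Z$, and the $S_2$-action swapping the two factors yields $G_2 \cong (\Z/r\Z) \wr S_2$; this is where I would invoke the classification of complex reflection groups \cite{Shephard1954} as in the overall proof strategy, or argue directly. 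Finally, $P_{[0,2]} \cong (Y^r)_{[0,2]}$ follows because the branching multiplicities between $G_1$ and $G_2$ are exactly read off from $\Res^{G_2}_{G_1}$ computed above, and these match $Y^r$ rank by rank. The main obstacle I anticipate is the bookkeeping in the middle step: showing that Clifford's Theorem plus the two numerical identities ($|G_2| = 2r^2$ and $\sum e(\mu)^2 = 2r^2$) plus the rank-$1$ consequence of $DU-UD = rI$ genuinely force a unique multiset of dimensions and a unique branching pattern, with the prime (or $r=1$) hypothesis entering to rule out intermediate subgroups of $\Z/r\Z$ and hence alternative Clifford orbit structures.
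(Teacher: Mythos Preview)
There is a genuine gap at the very start: you assert that $G_1$ has index $2$ in $G_2$ and is therefore normal, but $|G_2| = 2r^2$ while $|G_1| = r$, so $[G_2:G_1] = 2r$, not $2$. In fact $G_1$ is \emph{not} normal in $G_2$ in the target group $(\Z/r\Z)\wr S_2$: the embedded copy of $\Z/r\Z$ sits as one factor of the base group $(\Z/r\Z)^2$, and conjugation by the transposition in $S_2$ swaps the two factors. Consequently your application of Clifford's Theorem to the pair $G_1 \trianglelefteq G_2$ is unfounded, and everything downstream of it (the claim $G_2/G_1 \cong \Z/2\Z$, the orbit analysis of characters of $G_1$, the abelianization count) collapses. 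The paper actually \emph{uses} the non-normality of $G_1$ in the case $r=2$: the branching pattern forces, via Clifford, that $G_1$ is not normal, which then rules out the quaternion group $Q_8$ (Hamiltonian) and leaves only $D_4$.

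The paper's argument for odd primes $r$ does use a normal abelian subgroup, but it is the Sylow $r$-subgroup of $G_2$, which has order $r^2$ (hence abelian) and index $2$ (hence normal). From this one gets $\dim V_\mu \in \{1,2\}$ for all $\mu$, and the relation $DU - UD = rI$ then forces exactly $\binom{r}{2}$ two-dimensional and $2r$ one-dimensional irreducibles, with the prescribed restrictions to $G_1$. The remaining step is a short classification of nonabelian groups of order $2r^2$, eliminating all but $(\Z/r\Z)\wr S_2$ by computing commutator subgroups. If you want to salvage your approach, replace $G_1$ by this Sylow subgroup as the normal subgroup to which Clifford is applied; the rest of your numerics (the $\binom{r}{2}$ and $2r$ counts) then line up with the paper's.
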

\begin{proof}
When $r=1$ the claim is clear, since $|G_2|=2$, so assume $r$ is prime.  If $r \neq 2$, then the Sylow $r$-subgroup of $G_2$ has order $r^2$ and so must be abelian; since it has index two, it is also normal in $G_2$.  It is a standard fact (see \cite{Serre1977}) that dimensions of irreducible representations must divide the index of an abelian normal subgroup.  Thus all irreducible representations of $G_2$ have dimension 1 or 2.  In order to satisfy the relation (\ref{eq:DTG}), there must be $\binom{r}{2}$ 2-dimensional irreducibles of $G_2$ which when restricted to $G_1$ give each of the possible pairs of characters, and there must be $2r$ linear characters, two of which restrict to each character of $G_1$.  It is a straightforward exercise (see, for example, \cite{Dummit2004} p. 185) to check that there are three nonabelian groups $G$ of order $2r^2$, all of which are semidirect products of $H=\langle h \rangle  \cong \Z/2\Z$ and the $r$-Sylow subgroup $N$.  If $N$ the cyclic group $C_{r^2}$, then $h$ acts on $N$ by inversion; if $N$ is $C_r \times C_r$, then $h$ acts either by inversion of a single factor, or inversion of both.  In the first and last cases one calculates that the commutator subgroup $[G,G]$ is of order $r^2$, thus $G$ has only two linear characters and cannot be $G_2$; the only remaining possibility is $G_2 = (\Z/r \Z) \wr S_2$.

When $r=2$, there is only one 2-dual graded graph up to rank 2, namely $(Y^2)_{[0,2]}$.  These branching rules imply that $G_1$ is not normal in $G_2$, since the 2-dimensional irreducible restricts to the sum of the trivial representation and a non-trivial summand, violating Clifford's theorem. The only nonabelian groups of order 8 are the dihedral group $D_4 \cong (\Z/2 \Z) \wr S_2$ and the quaternion group $Q_8$.  However $Q_8$ is well-known to be a Hamiltonian group, contradicting the fact that $G_1$ is not normal.  Thus we must have $G_2 \cong (\Z/2\Z) \wr S_2$ as desired.
\end{proof}

\subsection{Facts about dual graded graphs} \label{sec:dgg-facts}

\begin{lemma} \label{lem:dgg-facts} Let $P$ be an $r$-dual graded graph such that $P_{[0,m]}$ has no multiple edges and let $x \neq y \in P_m$.  Then
\begin{itemize}
\item[(a)] If $z,z' \in P_{m+1}$ both cover both $x$ and $y$, then $z=z'$.
\item[(b)] If $z \in P_{m+1}$ covers both $x$ and $y$, then there is some $w \in P_{m-1}$ which is covered by both $x$ and $y$ and $m(x,z)=m(y,z)=m(w,x)=m(w,y)=1$.
\end{itemize}
\end{lemma}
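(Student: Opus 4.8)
The plan is to derive both statements from the single numerical identity forced by the relation $DU - UD = rI$, together with a strong induction on $m$.

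First I would record the key computation. For any $x \neq y$ in $P_m$, pairing the identity $(DU - UD)x = rx$ against $y$ and using $\langle x, y \rangle = 0$ gives $\langle DUx, y \rangle = \langle UDx, y \rangle$. Expanding via the definitions of $U$ and $D$, the left-hand side equals $N^+(x,y) := \sum_{z \in P_{m+1},\ x,y \lessdot z} m(x,z)m(y,z)$, a weighted count of common upper covers, and the right-hand side equals $N^-(x,y) := \sum_{w \in P_{m-1},\ w \lessdot x,y} m(w,x)m(w,y)$, a weighted count of common lower covers. Since $P_{[0,m]}$ has no multiple edges, every $m(w,x)$ and $m(w,y)$ appearing in $N^-(x,y)$ equals $1$, so $N^-(x,y)$ is literally the number of elements of $P_{m-1}$ covered by both $x$ and $y$.

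The heart of the argument is the auxiliary claim $S(m)$: any two distinct elements of $P_m$ have at most one common lower cover, i.e. $N^-(x,y) \leq 1$. I would prove $S(m)$ together with parts (a) and (b) at level $m$ by induction on $m$, via the implications ``part (a) at level $m-1$'' $\Rightarrow S(m) \Rightarrow$ ``parts (a) and (b) at level $m$''. For the first implication: if distinct $x,y \in P_m$ had two distinct common lower covers $w \neq w'$ in $P_{m-1}$, then $x$ and $y$ would be two elements of $P_{(m-1)+1}$ each covering both $w$ and $w'$; since $P_{[0,m-1]}$ also has no multiple edges, part (a) at level $m-1$ forces $x = y$, a contradiction. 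For the second implication: given $S(m)$, the identity above yields $N^+(x,y) = N^-(x,y) \leq 1$ for all distinct $x,y \in P_m$. If $z,z' \in P_{m+1}$ both cover both $x$ and $y$ with $z \neq z'$, then $N^+(x,y) \geq m(x,z)m(y,z) + m(x,z')m(y,z') \geq 2$, a contradiction; this gives (a). If $z$ covers both $x$ and $y$, then $1 \leq m(x,z)m(y,z) \leq N^+(x,y) = N^-(x,y) \leq 1$, so all of these quantities equal $1$: from $m(x,z)m(y,z) = 1$ we get $m(x,z) = m(y,z) = 1$, and from $N^-(x,y) = 1$ there is exactly one $w \in P_{m-1}$ covered by both $x$ and $y$, with $m(w,x) = m(w,y) = 1$ by hypothesis; this gives (b). The base case $m = 0$, and more generally the case where $P_m$ has fewer than two elements, is immediate since $P_0 = \{\hat{0}\}$.

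Since everything reduces to the numerical identity, I do not expect a genuine obstacle beyond bookkeeping; the one point to get right is the direction of the induction — namely the observation that the ``no two common lower covers'' statement $S(m)$ is exactly part (a) applied one rank down — and taking care to invoke the no-multiple-edges hypothesis both to interpret $N^-(x,y)$ as an honest count of common lower covers and to conclude $m(w,x) = m(w,y) = 1$ in part (b).
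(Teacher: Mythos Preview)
Your proof is correct and is essentially the paper's argument, just organized as an explicit upward induction via the auxiliary statement $S(m)$ rather than the paper's downward descent (``repeating this argument, we arrive at a contradiction, since $P_0$ has a single element''). Both rest on the same identity $N^+(x,y)=N^-(x,y)$ and the same use of the no-multiple-edges hypothesis; your packaging is a bit more careful about the bookkeeping, but the content is the same.
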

\begin{proof} \text{}
\begin{itemize}
\item[(a)] Suppose $z \neq z'$, then the coefficient of $y$ in $DUx$ is at least 2.  Therefore the coefficient of $y$ in $UDx$ must also be at least 2.  Since $P_{[0,m]}$ has no multiple edges, there must be some $w \neq w' \in P_{m-1}$ such that $x,y$ both cover $w$ and $w'$.  Repeating this argument, we arrive at a contradiction, since $P_0$ has a single element.  Thus $z=z'$.
\item[(b)] Since $z$ covers $x$ and $y$, we know $y$ appears in $DUx$, and so it must also appear in $UDx$; that is, there must be some $w \in P_{m-1}$ covered by $x$ and $y$, and by part (a) this $w$ is unique.  There are no multiple edges in $P_{[0,m]}$, so $y$ appears with coefficient 1 in $UDx$, thus it must also have coefficient 1 in $DUx$ which forces $m(x,z)=m(y,z)=m(w,x)=m(w,y)=1$.
\end{itemize}
\end{proof}

\subsection{Inductive step} \label{sec:inductive-step} 

By a \textit{partial $r$-dual tower of groups} we mean a finite sequence of finite groups $\{e\}=G_0 \subset G_1 \subset \cdots \subset G_{m+1}$ such that for $i=1,...,m$ we have
\[
\Res^{G_{i+1}}_{G_{i}} \Ind_{G_{i}}^{G_{i+1}} - \Ind_{G_{i-1}}^{G_{i}} \Res_{G_{i-1}}^{G_{i}} = r I.
\]
and we similarly define a partial $r$-dual graded graph.

\begin{prop} \label{prop:inductive-step}
For $m \geq 2$, let $\{e\}=G_0 \subset G_1 \subset \cdots \subset G_{m+1}$ be a partial $r$-dual tower of groups with corresponding partial dual graded graph $P$. Suppose that for $n \leq m$ we have $G_n \cong (\Z/r \Z) \wr S_n$ and that $P_{[0,m]} \cong (Y^r)_{[0,m]}$.  Then $G_{m+1} \cong (\Z/r \Z) \wr S_{m+1}$ and $P_{[0,m+1]} \cong (Y^r)_{[0,m+1]}$.
\end{prop}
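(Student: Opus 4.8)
The strategy is to first pin down the combinatorial structure of $P_{[0,m+1]}$ using the dual graded graph relation and Lemma~\ref{lem:dgg-facts}, then use representation-theoretic constraints (order counting from Proposition~\ref{prop:size-of-Gn}, Clifford's theorem, and the classification of complex reflection groups) to identify $G_{m+1}$. First I would show that $P_{[0,m+1]}$ has no multiple edges: since $P_{[0,m]} \cong (Y^r)_{[0,m]}$, the only place multiplicities could appear is between ranks $m$ and $m+1$, and the relation $DU - UD = rI$ applied to a basis element $x \in P_m$, combined with the fact that $UDx$ has no multiple-edge contributions (as $P_{[0,m]}$ is multiplicity-free), forces $DUx$ to be multiplicity-free as well, hence $m(x,z) = 1$ for all covers $z \gtrdot x$. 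Then, knowing $Y^r$ up to rank $m$, each $x \in P_m \cong (Y^r)_m$ has exactly $r + (\text{number of elements covered by } x)$ elements above it (reading off the coefficient of $x$ in $DUx$), and Lemma~\ref{lem:dgg-facts}(a),(b) governs exactly how the ``up-sets'' of distinct elements of $P_m$ overlap: two elements $x \ne y$ share an element above them if and only if they share a unique element below them, and in that case they share exactly one element above. This is precisely the incidence structure of $(Y^r)_{[m,m+1]}$, so a combinatorial matching argument (identifying elements of $P_{m+1}$ with $r$-tuples of partitions of total size $m+1$ via their down-sets) gives $P_{[0,m+1]} \cong (Y^r)_{[0,m+1]}$.

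With the branching rules fixed, I would turn to identifying $G_{m+1}$. By Proposition~\ref{prop:size-of-Gn}, $|G_{m+1}| = r^{m+1}(m+1)!$, and $|G_m| = r^m m!$, so the index $[G_{m+1} : G_m] = r(m+1)$. Since $G_m \cong (\Z/r\Z) \wr S_m$ is a complex reflection group acting on $\C^m$ (in its reflection representation $V_{((m-1),\emptyset,\dots,\emptyset,(1),\emptyset,\dots,\emptyset)}$ or $V_{((m-1,1))}$ when $r = 1$, as in Example~\ref{ex:tower-of-symmetric-groups}), the goal is to show $G_{m+1}$ is the complex reflection group $(\Z/r\Z) \wr S_{m+1}$. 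The key leverage is that the branching rule $\Res^{G_{m+1}}_{G_m}$, now known to be that of $(Y^r)_{[m,m+1]}$, tells us exactly how every irreducible of $G_{m+1}$ decomposes over $G_m$; in particular the reflection representation of $G_m$ appears in the restriction of a small, explicit set of $G_{m+1}$-irreducibles, and one such irreducible must be (a twist of) the reflection representation of $G_{m+1}$ of dimension $m+1$ (or $m$ when $r=1$). One shows that $G_{m+1}$ is generated by $G_m$ together with one more element, that this element can be taken to act on the ambient space $\C^{m+1}$ as a reflection normalizing the $G_m$-action, and hence that $G_{m+1}$ is a finite complex reflection group of order $r^{m+1}(m+1)!$ containing $(\Z/r\Z) \wr S_m$; by the Shephard--Todd classification \cite{Shephard1954} the only such group is $(\Z/r\Z) \wr S_{m+1}$. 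Clifford's theorem enters to control the restriction of irreducibles to the normal subgroup $(\Z/r\Z)^{m}$ (or $(\Z/r\Z)^{m+1}$) and to rule out degenerate gluings, exactly as in the base case Proposition~\ref{prop:base-case}.

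The main obstacle I anticipate is the passage from ``$G_{m+1}$ has the branching rules of $(\Z/r\Z)\wr S_{m+1}$ over $G_m \cong (\Z/r\Z)\wr S_m$'' to ``$G_{m+1} \cong (\Z/r\Z)\wr S_{m+1}$ as an abstract group'': branching rules constrain the representation theory but do not a priori determine the group, and there could in principle be several non-isomorphic groups with the same Bratteli diagram over the same subgroup. The resolution has to exploit the rigidity of reflection groups --- realizing $G_{m+1}$ concretely as a reflection group via its low-dimensional faithful irreducible, showing the extra generator acts as a pseudo-reflection, and then invoking the classification --- rather than any purely character-theoretic argument. A secondary technical point is the case $r = 1$, where the relevant reflection representation of $S_m$ is $V_{((m-1,1))}$ rather than a sum of a trivial and a one-dimensional piece, so the normalization and the appeal to Clifford's theorem must be phrased to cover both cases uniformly; this is the same bifurcation already handled in Proposition~\ref{prop:base-case} and should go through analogously.
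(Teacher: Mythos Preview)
Your plan reverses the order of the paper's argument, and the first step contains a genuine gap. You propose to determine $P_{[0,m+1]}$ purely combinatorially from $P_{[0,m]}\cong (Y^r)_{[0,m]}$ and the relation $DU-UD=rI$, and only afterwards identify $G_{m+1}$. But $(Y^r)_{[0,m]}$ does \emph{not} have a unique one-step extension as a partial $r$-dual graded graph: the conditions you cite (up-degree $=r+$ down-degree; $x,y$ share a cover iff they share a covered element, and then the common cover is unique) do not pin down which common covers coincide when three or more elements of $P_m$ are pairwise linked below. Concretely, for $r=1$ and $m=4$, the three partitions $(3,1),(2,2),(2,1,1)$ all pairwise cover $(2,1)$; in $Y$ their three pairwise common covers are distinct ($(3,2),(3,1,1),(2,2,1)$), but one can instead introduce a single rank-$5$ element covering all three and adjust the remaining singleton covers, obtaining a different valid $1$-differential extension of $Y_{[0,4]}$ with $|P_5|=8\neq 7$. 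So ``identifying elements of $P_{m+1}$ with $r$-tuples of partitions via their down-sets'' does not work, and your combinatorial matching argument cannot succeed without group-theoretic input. (The side claim that $DUx$ being ``multiplicity-free'' forces $m(x,z)=1$ is also unjustified: the diagonal entry $\sum_z m(x,z)^2=r+d$ does not by itself rule out a single edge of multiplicity $\ge 2$ to a $z$ covering only $x$.)

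The paper avoids this by working in the opposite direction. It introduces the normal closure $N$ of $G_m$ in $G_{m+1}$ and uses Clifford's theorem for $N\trianglelefteq G_{m+1}$ (not for $(\Z/r\Z)^m$, which is not known to be normal in $G_{m+1}$) together with Lemma~\ref{lem:dgg-facts} to analyze only the small piece of the branching visible from $\Ind\,\mathbbm{1}_{G_m}$. From this it extracts the index $r'=[G_{m+1}:N]$, shows $r'\in\{1,r\}$ and then rules out $r'=r$ by a kernel-size count, so that $N=G_{m+1}$; this is what replaces your unproven assertion that $G_{m+1}$ is generated by $G_m$ and one extra reflection. Once $G_{m+1}$ is generated by $G_{m+1}$-conjugates of $G_m$, conjugates of reflections are reflections, a faithful $(m{+}1)$-dimensional irreducible is exhibited, and the Shephard--Todd classification identifies $G_{m+1}\cong(\Z/r\Z)\wr S_{m+1}$. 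Only then does the paper read off $P_{[0,m+1]}\cong (Y^r)_{[0,m+1]}$ from the known representation theory of the wreath product. The group-theoretic step is doing essential work that your combinatorial step cannot.
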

\begin{proof}
The goal is to show that $G_{m+1}$ must be an irreducible complex reflection group of rank $m+1$ by identifying a faithful irreducible representation $W$ of dimension $m+1$ and a generating set for $G_{m+1}$ such that all elements of the generating set act in $W$ as complex reflections.

Since $P_{[0,m]} \cong (Y^r)_{[0,m]}$ we know that $\Ind_{G_{m-1}}^{G_m} \mathbbm{1}_{G_{m-1}}$ decomposes into distinct irreducibles as $\mathbbm{1}_{G_m} \oplus \bigoplus_{i=1}^r U_i$ where $U_1$ has dimension $m-1$ and $U_i$ has dimension $m$ for $i \geq 2$.  Define distinct irreducible representations $W_k$ by $\Ind_{G_m}^{G_{m+1}} \mathbbm{1}_{G_m} \cong \mathbbm{1}_{G_{m+1}} \oplus \bigoplus_{k} W_k^{\oplus c_k}$ with $c_k>0$.  Let $N$ be the smallest normal subgroup of $G_{m+1}$ which contains $G_m$.  By Lemma \ref{lem:dgg-facts}, there must be some $W_k$ whose restriction to $G_m$ contains both $\mathbbm{1}_{G_m}$ and $U_1$, since both of these representations cover $\mathbbm{1}_{G_{m-1}}$.  Then by Clifford's Theorem, $G_m$ is not normal in $G_{m+1}$ since $\dim(\mathbbm{1}_{G_m}) \neq \dim(U_1)$.  Thus $N$ properly contains $G_m$.  Let the $V_j$ be the distinct nontrivial irreducible representations of $N$ appearing in $\Res^{G_{m+1}}_N W_k$ for some $k$.

We now collect some facts about the representations $U_i, V_j$ and $W_k$.  In order to avoid naming homomorphisms $\ph: G \to GL(V)$ corresponding to all representations $V$, we abuse notation by writing $\ker(V)$ for $\ker(\ph)$.

\begin{lemma} \label{lem:UVW-facts} \text{}
\begin{enumerate}
\item[(a)] Either $\Res^{G_{m+1}}_N W_k = \mathbbm{1}_N^{\oplus c}$ or $\Res^{G_{m+1}}_{G_m} W_k$ is a multiplicity-free direct sum of $\mathbbm{1}_{G_m}$ and some of the $U_i$.  In the latter case, $\Res^{G_{m+1}}_N W_k$ contains a unique $V_j$ such that $\Res^N_{G_m} V_j$ contains $\mathbbm{1}_{G_m}$, and $V_j$ does so with multiplicity one.
\item[(b)] $\Res^N_{G_m} V_j$ must contain some $U_i$.
\item[(c)] No distinct $W_k, W_{k'}$ may contain the same $V_j$ in their restriction to $N$.
\item[(d)] No distinct $V_j, V_{j'}$ may contain the same $U_i$ in their restriction to $G_m$.
\end{enumerate}
\end{lemma}
\begin{proof}[Proof of lemma \ref{lem:UVW-facts}] \text{}
\begin{enumerate} 
\item[(a)] Since $N$ is normal, by Clifford's Theorem either all irreducible factors in $\Res^{G_{m+1}}_N W_k$ are trivial, or none are; in the first case we are done, so assume we are in the second case.  By definition, $\Res^{G_{m+1}}_{G_m} W_k$ must contain $\mathbbm{1}_{G_m}$; however it must also contain some nontrivial factor, otherwise $G_m \subseteq \ker(W_k)$ but $N \not \subseteq \ker(W_k)$, contradicting the fact that $N$ is the smallest normal subgroup containing $G_m$.  Thus $\Res^{G_{m+1}}_{G_m} W_k$ contains $\mathbbm{1}_{G_m}$ and some nontrivial irreducible $U$.  By Lemma \ref{lem:dgg-facts}, $U$ must cover $\mathbbm{1}_{G_{m-1}}$, so $U$ is one of the $U_i$ and both $\mathbbm{1}_{G_m}$ and $U_i$ appear with multiplicity one in $\Res^{G_{m+1}}_{G_m} W_k$.  The last statement now follows immediately.

\item[(b)] Suppose $V_j$ does not contain any $U_i$ in its restriction; pick $W_k$ which contains $V_j$ in its restriction.  Then, to satisfy the multiplicity freeness condition in part (a) we must have $\Res^{N}_{G_m} V_j = \mathbbm{1}_{G_m}$, so $\dim(V_j)=1$.  By Clifford's theorem $W_k$ only contains 1-dimensional representations in its restriction to $N$.  But any other possible 1-dimensional representation appearing in this restriction itself restricts to $\mathbbm{1}_{G_m}$, so in order to satisfy multiplicity freeness, it must be that $\dim(W_k)=1$ and $\Res^{G_{m+1}}_N W_k=V_j$.  But then $\ker(W_k)$ is a normal subgroup of $G_{m+1}$ which contains $G_m$ but which does not contain $N$, contradicting the minimality of $N$.

\item[(c)] Suppose $\Res^{G_{m+1}}_N W_k$ and $\Res^{G_{m+1}}_N W_{k'}$ both contain $V_j$.  Since, by part (b), $\Res^N_{G_m} V_j$ must contain some $U_i$ we see that $W_k, W_{k'}$ both cover both $\mathbbm{1}_{G_m}$ and $U_i$, thus by Lemma \ref{lem:dgg-facts}, $W_k=W_{k'}$.

\item[(d)] Suppose $\Res^{N}_{G_m} V_j$ and $\Res^{N}_{G_m} V_{j'}$ both contain $U_i$.  If $V_j,V_{j'}$ are both contained in the restriction of the same $W_k$, then this violates multiplicity freeness from part (a).  Otherwise, $V_j, V_{j'}$ are contained in the restrictions of $W_k \neq W_{k'}$ respectively and then both $W_k, W_{k'}$ cover both $\mathbbm{1}_{G_m}$ and $U_i$, contradicting $  $Lemma \ref{lem:dgg-facts}.

\end{enumerate}
\end{proof}

\begin{center}
\begin{figure} \label{fig:two-UVW-possibilities}
\begin{tikzpicture} 
\node(1111) at (-3,3) {$\mathbbm{1}_{G_{m+1}}$};
\node(111) at (-2,2) {$\mathbbm{1}_{N}$};
\node(11) at(-1,1) {$\mathbbm{1}_{G_{m}}$};
\node(1) at (0,0) {$\mathbbm{1}_{G_{m-1}}$};
\node(u1) at (0,1) {$U_1$};
\node(u2) at (1,1) {$U_2$};
\node(u3) at (2,1) {$U_3$};
\node(v1) at (-1,2) {$V_1$};
\node(v2) at (0,2) {$V_2$};
\node(v3) at (1,2) {$V_3$};
\node(w1) at (-2,3) {$W_1$};
\node(w2) at (-1,3) {$W_2$};
\node(w3) at (0,3) {$W_3$};

\draw[thick, shorten <=-4pt, shorten >=-4pt] (111)--(1111);
\draw[thick, shorten <=-4pt, shorten >=-4pt] (11)--(111);
\draw[thick, shorten <=-4pt, shorten >=-4pt] (1)--(11);

\draw[thick, shorten <=-5pt, shorten >=-2pt] (1)--(u1);
\draw[thick, shorten <=-5pt, shorten >=-2pt] (1)--(u2);
\draw[thick, shorten <=-5pt, shorten >=-2pt] (1)--(u3);

\draw[thick, shorten <=-5pt, shorten >=-2pt] (11)--(v1);
\draw[thick, shorten <=-5pt, shorten >=-2pt] (11)--(v2);
\draw[thick, shorten <=-5pt, shorten >=-2pt] (11)--(v3);

\draw[thick, shorten <=-2pt, shorten >=-2pt] (u1)--(v1);
\draw[thick, shorten <=-2pt, shorten >=-2pt] (u2)--(v2);
\draw[thick, shorten <=-2pt, shorten >=-2pt] (u3)--(v3);

\draw[thick, shorten <=-2pt, shorten >=-2pt] (v1)--(w1);
\draw[thick, shorten <=-2pt, shorten >=-2pt] (v2)--(w2);
\draw[thick, shorten <=-2pt, shorten >=-2pt] (v3)--(w3);
\end{tikzpicture}
\begin{tikzpicture} 
\node(1111) at (-3,3) {$\mathbbm{1}_{G_{m+1}}$};
\node(111) at (-2,2) {$\mathbbm{1}_{N}$};
\node(11) at(-1,1) {$\mathbbm{1}_{G_{m}}$};
\node(1) at (0,0) {$\mathbbm{1}_{G_{m-1}}$};
\node(u1) at (0,1) {$U_1$};
\node(u2) at (1,1) {$U_2$};
\node(u3) at (2,1) {$U_3$};
\node(v1) at (-1,2) {$V_1$};
\node(v2) at (0,2) {$V_2$};
\node(v3) at (1,2) {$V_3$};
\node(w1) at (-2,3) {$W_1$};
\node(w2) at (-1,3) {$W_2$};
\node(w3) at (0,3) {$W_3$};

\draw[thick, shorten <=-4pt, shorten >=-4pt] (111)--(1111);
\draw[thick, shorten <=-4pt, shorten >=-4pt] (11)--(111);
\draw[thick, shorten <=-4pt, shorten >=-4pt] (1)--(11);

\draw[thick, shorten <=-5pt, shorten >=-2pt] (1)--(u1);
\draw[thick, shorten <=-5pt, shorten >=-2pt] (1)--(u2);
\draw[thick, shorten <=-5pt, shorten >=-2pt] (1)--(u3);

\draw[thick, shorten <=-5pt, shorten >=-2pt] (11)--(v1);

\draw[thick, shorten <=-2pt, shorten >=-2pt] (u1)--(v1);
\draw[thick, shorten <=-2pt, shorten >=-2pt] (u2)--(v2);
\draw[thick, shorten <=-2pt, shorten >=-2pt] (u3)--(v3);

\draw[thick, shorten <=-2pt, shorten >=-2pt] (v1)--(w3);
\draw[thick, shorten <=-2pt, shorten >=-2pt] (v2)--(w3);
\draw[thick, shorten <=-2pt, shorten >=-2pt] (v3)--(w3);
\draw[thick, shorten <=-5pt, shorten >=-2pt] (111)--(w1);
\draw[thick, shorten <=-5pt, shorten >=-2pt] (111)--(w2);
\end{tikzpicture}
\caption{The two possibilities for the branching rules between the $U_i,V_j,W_k$ when $r=3$.  On the left is the case $r'=1$ and on the right the case $r'=r=3$.}
\end{figure}
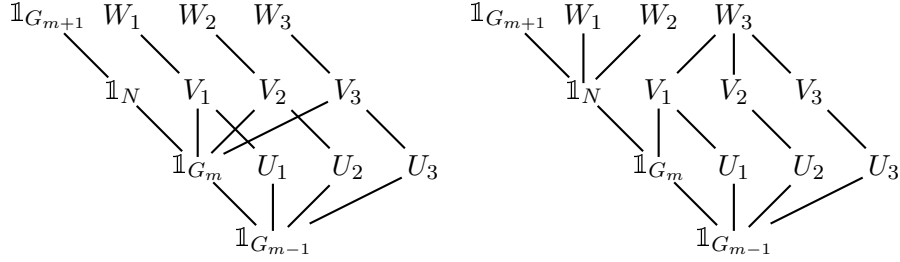
\end{center}

We now return to the proof of Proposition \ref{prop:inductive-step}.  Define $r'=[G_{m+1}:N]$.  Since representations of $G_{m+1}/N$ are in dimension-preserving bijection with representations of $G_{m+1}$ which restrict to a multiple of the trivial representation on $N$, we see that $r'=1+ \sum a_k^2$ where $a_k$ is the multiplicity of $\mathbbm{1}_N$ in $\Res^{G_{m+1}}_N W_k$.  Let $b_k>0$ denote the multiplicity of $\mathbbm{1}_{G_m}$ in $\Res^{G_{m+1}}_{G_m} W_k$; clearly we have $a_k \leq b_k$, and $a_k=b_k$ if $a_k>0$.  By the definition of an $r$-dual tower of groups, we know $\sum_k b_k^2 = r$.  Furthermore, at least one $W_{k'}$ must contain some $U_i$ in its restriction, since $\mathbbm{1}_{G_m}$ and $U_i$ must have an upper bound in rank $m+1$, and thus $a_{k'}=0$ by Lemma \ref{lem:UVW-facts}.  Therefore $1 \leq r' \leq r$.  

Now, we have
\[
\dim \Ind_{G_m}^{N} \mathbbm{1}_{G_m} = [N:G_m]=[G_{m+1}:G_m]/[G_{m+1}:N]=\frac{r(m+1)}{r'}.
\]
On the other hand, if $a_k=0$, then $b_k=1$ by Lemma \ref{lem:UVW-facts}, thus there are $r+1-r'$ representations $W_k$ which contain some $V_j$. By the same lemma there must be $r+1-r'$ of the $V_j$ whose restriction contains $\mathbbm{1}_{G_m}$ and each of these $V_j$'s must contain some $U_i$, while only one of them can contain $U_1$.  This implies that 
\[
\dim \Ind_{G_m}^{N} \mathbbm{1}_{G_m} \geq 1 + m + (r-r')(m+1).
\]
Thus we have $\frac{r(m+1)}{r'} \geq 1+m+(r-r')(m+1)$, and so
\[
rm+r \geq (r'+rr'-(r')^2)m +(r'+rr'-(r')^2)
\]
This forces $r \geq r'+rr'-(r')^2$, that is:
\[
(r'-1)(r-r') \leq 0.
\]
Therefore $r'$ must be equal to 1 or $r$.  

When $r=r'$, we have $[N:G_m]=(m+1)$, this forces there to be some $V_j$ which restricts exactly to $\mathbbm{1}_{G_m} \oplus U_1$.  Then by Clifford's Theorem, each of the $V_j$ must have dimension $m$, so the branching rules are as depicted in Figure \ref{fig:two-UVW-possibilities} for $r=3$, and extend in the obvious way to larger $r$.

\begin{lemma} \label{lem:not-fib-case}
$r'=1$.
\end{lemma}
\begin{proof}[Proof of Lemma \ref{lem:not-fib-case}]
Suppose $r>1$ and $r'=r$.  Let $V_1$ be the unique $V_j$ whose restriction contains $U_1$.  The representation $U_1$ is the $(m-1)$-dimensional representation of $G_m=(\Z/r \Z) \wr S_m$ obtained by projecting onto the symmetric group $S_m$ and applying the usual irreducible reflection representation of $S_m$.  Thus $\ker(U_1) \cong (\Z/r \Z)^m$ is the \textit{base group} (see \cite{James1981}, Chapter 4) of the wreath product.  In particular $|\ker(U_1)| = r^m$, and since $\Res V_1 = \mathbbm{1}_{G_m} \oplus U_1$, we see that $|\ker(V_1)| \geq r^m$.  By Clifford's Theorem, the representation $V_2$ is conjugate to $V_1$; that is, if $\ph_1, \ph_2: N \to GL_m$ are the corresponding maps, then for some $g \in G_{m+1}$ we have $\ph_2(x)=\ph_1(gxg^{-1})$ for all $x$.  This implies that $\ker(V_2)=g^{-1}(\ker(V_1))g$, so $|\ker(V_2)| \geq r^m$.  However $\Res V_2 = U_2$ is faithful, so $\ker(V_2)$ intersects $G_m$ trivially.  This means that the product group $K=(\ker(V_2)) \cdot G_m$ has order at least $r^m \cdot r^m m!$.  But if $r > 1$ this is greater than $|N|=r^m (m+1)!$, a contradiction.
\end{proof}

\begin{lemma} \label{lem:faithfulness}
We have $N=G_{m+1}$ (and so we can identify $V_j=W_j$ for all $j$,  relabeling if necessary).  Let $V_1$ be the unique $V_j$ whose restriction contains $U_1$, then $\dim(V_1)=m$ and $\dim(V_j)=m+1$ for $j \neq 1$.  Futhermore, $V_j$ is a faithful reflection representation of $G_{m+1}$ for $j \neq 1$ (if $r=1$ then $V_1$ is a faithful reflection representation). 
\end{lemma}
\begin{proof}[Proof of Lemma \ref{lem:faithfulness}]
By definition $r'=[G_{m+1}:N]$, since $r'=1$ by Lemma \ref{lem:not-fib-case} we have $G_{m+1}=N$.  The $V_j$, which were defined by restricting the $W_k$ to $N$ are thus the same as the $W_k$ and we can relabel to let $V_j=W_j$.  The representation $U_1$ has dimension $m-1$ and each other $U_i$ has dimension $m$.  Letting $U_i$ be the unique nontrivial representation appearing in $\Res V_i$ for all $i$ we have $\Res^{G_{m+1}}_{G_m} V_i=U_i \oplus \mathbbm{1}_{G_m}$, so $\dim(V_1)=m$ and $\dim(V_j)=m+1$ for $j \neq 1$.  

The representations $U_i$ for $i \neq 1$ can be realized as permutation matrices but with the 1's replaced by character values of one of the nontrivial representations of $\Z/r \Z$ (see \cite{James1981}).  Let $r$ be prime, then these representations of $\Z / r \Z$ are faithful, and thus $U_2,...,U_r$ are faithful as well.  Let $\psi_2: G_m \to GL(U_2)$ and $\ph_2: G_{m+1} \to GL(V_2)$ be the maps corresponding to the representations $U_2,V_2$, where, since $\Res V_2 = U_2 \oplus \mathbbm{1}_{G_m}$, we identify $GL(U_2)$ as a subgroup of $GL(V_2)$ in the natural way.  The representation $U_2$ is a reflection representation of $G_m$, so $G_m$ is generated by elements $X_m \subset G_m$ which act in $U_2$ as complex reflections.  Now, since $G_{m+1}=N$ is the smallest normal subgroup containing $G_m$, it is generated by the conjugacy classes in $G_{m+1}$ of the elements in $X_m$.  Since $\Res V_2 = U_2 \oplus \mathbbm{1}_{G_m}$, the elements of $X_m$ also act as complex reflections in $V_2$, and thus so do their $G_{m+1}$ conjugates.  Therefore $\im(\ph_2)$ is generated as a group by complex reflections.  It remains to check that $\ph_2$ is faithful.

Let $H_{m+1}:=\im(\ph_2)$ and let $G_m \cong H_m := \im(\psi_2) \subset H_{m+1}$.  We know that $\psi_2$ is faithful, so $|H_m|=r^mm!$, and that $H_{m+1}$ is an irreducible complex reflection group of rank $m+1$ whose order divides $|G_{m+1}|=r^{m+1}(m+1)!$.  By the classification of irreducible complex reflection groups \cite{Shephard1954}, we must either have $|H_{m+1}|=r^m(m+1)!$ or $r^{m+1}(m+1)!$ (all of the exceptional reflection groups $H$ have multiple prime factors dividing $|H|/\rank(H)!$, thus $H_{m+1}$ must belong to the infinite family).  We have $[H_{m+1}:H_m]=\dim(\Ind_{H_m}^{H_{m+1}} \mathbbm{1}_{H_m})$.  But $\Ind_{H_m}^{H_{m+1}} \mathbbm{1}_{H_m}$ contains summands $\mathbbm{1}_{H_{m+1}}$ and $V_2$, so $[H_{m+1}:H_m] \geq m+2$.  This forces $|H_{m+1}|=r^{m+1}(m+1)!$, so $\ph_2$ is faithful.  Therefore, as it is an irreducible complex reflection group of order $r^{m+1}(m+1)!$ and rank $m+1$, we must have $G_{m+1} \cong (\Z/r \Z) \wr S_{m+1}$.

Finally, if $r=1$, then $U_1$ is faithful, it is the usual reflection representation of $S_m$.  We see that $\Ind \mathbbm{1}_{G_m} = V_1 \oplus \mathbbm{1}_{G_{m+1}}$.  Since $U_1$ is faithful and appears in the restriction of $V_1$, we see that $V_1$ is faithful on any conjugacy class intersecting $G_m$.  On any conjugacy class not intersecting $G_m$, the character value of $\Ind \mathbbm{1}_{G_m}$ is 0, and so $\chi_{V_1}=-1$ on these conjugacy classes.  Thus $V_1$ is faithful and $G_{m+1}$ is an irreducible complex reflection group of rank $m$ and order $(m+1)!$.  By the classification, this forces $G_{m+1} \cong S_{m+1}$.
\end{proof}


By \cite{Shephard1954}, we can take $H_{m+1}$ to be the standard realization of $(\Z/r \Z) \wr S_{m+1}$ in $GL(V_2)$ as monomial matrices.  Since $\Res^{G_{m+1}}_{G_m} V_2  \cong \mathbbm{1}_{G_m} \oplus U_2$, we see that $H_m$ is conjugate in $H_{m+1}$ to the standard embedding of $m \times m$ monomial matrices into $H_{m+1}$.  This shows that the embedding $G_m \subset G_{m+1}$ is conjugate to the usual one, and so we have the usual branching rules, thus $P_{[0,m+1]} \cong (Y^r)_{[0,m+1]}$.  The proof of Proposition \ref{prop:inductive-step} is now complete.  This result, together with the base case Proposition \ref{prop:base-case}, implies Theorem \ref{thm:main-theorem}.
\end{proof}

\section*{Acknowledgements}
I am grateful to Christopher Ryba for his thorough answers to many of my questions about the symmetric group.

\bibliographystyle{plain}

\end{document}